\documentclass[11pt,leqno,twoside]{amsart}
\usepackage{amssymb,amsmath,amsthm,soul,color}
\usepackage{t1enc}
\usepackage[cp1250]{inputenc}
\usepackage{a4,indentfirst,latexsym,}
\usepackage{graphics}
\usepackage{mathrsfs}
\usepackage{cite,enumitem,graphicx}
\usepackage[colorlinks=true,urlcolor=blue,
citecolor=red,linkcolor=blue,linktocpage,pdfpagelabels,
bookmarksnumbered,bookmarksopen]{hyperref}
\usepackage[english]{babel}
\usepackage[left=2.61cm,right=2.61cm,top=2.72cm,bottom=2.72cm]{geometry}
\usepackage[metapost]{mfpic}
\usepackage[hyperpageref]{backref}
\usepackage[colorinlistoftodos]{todonotes}
\usepackage[normalem]{ulem}

\makeatletter
\providecommand\@dotsep{5}
\def\listtodoname{List of Todos}
\def\listoftodos{\@starttoc{tdo}\listtodoname}
\makeatother

\numberwithin{equation}{section}
\newtheorem{theorem}{Theorem}[section]

\newtheorem{remark}{Remark}

\def\R{\mathbb{R}}




\title[Multiple solutions for some  strongly degenerate elliptic equations]
{Multiple solutions for some  \\ strongly degenerate 
second order elliptic equations}

\author[J. R. Santos Jr.]
{Jo\~ao R. Santos Junior
}
\author[G. Siciliano]{Gaetano Siciliano$^{*}$ 
 }

\address[J. R. Santos Jr.]
{\newline\indent Faculdade de Matem\'atica
\newline\indent 
Instituto de Ci\^{e}ncias Exatas e Naturais
\newline\indent 
Universidade Federal do Par\'a
\newline\indent
Avenida Augusto corr\^{e}a 01, 66075-110, Bel\'em, PA, Brazil}
\email{\href{mailto: joaojunior@ufpa.br }{joaojunior@ufpa.br}
}
\address[G. Siciliano]
{\newline\indent Departamento de Matem\'atica
\newline\indent 
Instituto de Matem\'atica e Estat\'istica
\newline\indent 
 Universidade de S\~ao Paulo 
\newline\indent 
Rua do Mat\~ao 1010,  05508-090, S\~ao Paulo, SP, Brazil }
\email{\href{mailto:sicilian@ime.usp.br}{sicilian@ime.usp.br}
}

\thanks{Jo\~ao R. Santos was partially
supported by CNPq 306503/2018-7, Brazil.
Gaetano Siciliano  was partially supported by
Fapesp 2018/17264-4, Capes and CNPq 304660/2018-3, Brazil. \\
($*$) Corresponding author}
\subjclass[2010]{ 
35J50,  	
35J57, 
	35J70. 
	}
\keywords{ Elliptic equations, degenerate operators, vanishing solutions}

\pretolerance10000

\begin{document}

\maketitle
\begin{abstract}
We consider a boundary value problem in a bounded domain involving 
a degenerate operator of the form
$$L(u)=-\textrm{div} (a(x)\nabla u)$$
and a suitable nonlinearity $f$.
The function $a$  vanishes on smooth 1-codimensional submanifolds of $\Omega$
where it is not allowed to be $C^{2}$.
By using  weighted Sobolev spaces we are still able to find existence of solutions
which vanish, in the trace sense, on the set where $a$ vanishes.
\end{abstract}

\bigskip

\maketitle
\begin{center}
\begin{minipage}{12cm}
\tableofcontents
\end{minipage}
\end{center}

\bigskip

\section{Introduction}


In this paper we are interested in the existence of ``suitable'' solutions
for a  degenerate nonlinear elliptic equation of second order in a bounded
and smooth domain in $\R^{N}$ with homogeneous Dirichlet boundary condition.
More specifically the equation under study  is driven by the operator 
$$L(u)=-\textrm{div} (a(x)\nabla u)$$
where, $a:\overline\Omega \to [0,+\infty)$, among other assumptions, is a continuous function such that $a(x)>0$
in the whole $\Omega$ except for suitable $1-$codimensional submanifolds
contained in $\Omega$ where it vanishes.
Hence  the ellipticity of $L$ is broken somewhere in $\overline\Omega$.
This kind of operator is also called   {\sl degenerate} due to the fact that 
$a^{-1}$ is unbounded.

Degenerate operators appear in many situations. Indeed 
it is known that many physical phenomena are described by degenerate evolution equations,
where the degeneracy can be due to the 
vanishing of the time derivative coefficient or to the vanishing of the diffusion coefficient.
In this context there is a strong connexion between degenerate 2nd order differential operators and
Markov processes: roughly speaking these operators describe a diffusion phenomena
of Markovian particle which moves until it reaches the set where the absorption takes place
and here the particle ``dies''. Because of this fact, degenerate equations are 
appropriate to describe
 fluid diffusion in nonhomogeneous  porous media
 taking into account saturation and porosity of the medium.
For more  applications and problems involving degenerate operators
one can see e.g \cite{C,CR,EM, EM2, MS} and the references therein.

Mathematically speaking, for degenerate partial differential equations, i.e., equations with various types of singularities in the coefficients, it is natural to look for solutions in weighted Sobolev spaces. A class of weights, which is particularly well understood, is the class of $A_{p}-$weights (or Muckenhoupt class) that was introduced by B. Muckenhoupt.
 The importance of this class is that 
powers of distance to submanifolds of  $\mathbb R^{N}$ often belong to $A_{p}$ (see \cite{KJF})
and these weight have found many useful applications also in harmonic analysis (see \cite{T}).
However there are also many other interesting examples of weights (see \cite{HKM} for $p$-admissible weights).
For some references on this subject see also  \cite{FJK,FS,FKS,K},
and for other applications of weighted Sobolev spaces see also \cite{SK}.

\medskip


To motivate the choice of the problem under study let us see the following example.
 Suppose 
additionally
that $f:\mathbb R\to \mathbb R$ is a continuous function such that $f(s)=0$
if and only if $s=0$. 
Let $\Omega\subset \mathbb R^{N}, N\geq2$
be a smooth and bounded domain and assume that $a\in C^{1}(\overline\Omega)$ 
is a positive function with  $a^{-1}(0)$ which is a regular connected submanifold
compactly contained in $\Omega$ 
and such that $\nabla a(x)=0$ for any $x\in a^{-1}(0)$. 
%
Consider the problem 
\begin{equation}\label{eq:distrib}
- \textrm{div}(a(x)\nabla u)= f(u)  \ \ \ \textrm{in} \ \  \mathcal D'(\Omega).
\end{equation}

Following \cite{PS} we say that $u_{*}\in \mathcal D'(\Omega)$
is a solution if $u_{*}\in  C^{1}(\Omega)$ and
the equation is 
satisfied in the sense of distribution, i.e. 
$$\int_{\Omega}  a(x) \nabla u_{*}\nabla \varphi = \int_{\Omega} f(u_{*})\varphi \quad \forall \varphi\in C^{\infty}_{c}(\Omega).$$
But then from \eqref{eq:distrib} it follows,  that 
$$-\nabla a(x)\nabla u_{*}-a(x)\Delta u_{*} = f(u_{*}) \quad \text{in } \mathcal D'(\Omega)$$
and since $f(u_{*})$ and $\nabla a(x) \nabla u_{*}$ are continuous functions, so is $a(x)\Delta u(x)$
(note that $a$ vanishes on a null set)
and we obtain
$$-\nabla a(x)\nabla u_{*}-a(x)\Delta u_{*} = f(u_{*}(x)) \quad \forall x\in \Omega.$$
From this identity 
we 
deduce
$$x\in a^{-1}(0) \Longrightarrow f(u_{*}(x))=0  \Longrightarrow 
u_{*}(x)=0.$$
 In other words, for such a problem, the solution is zero whenever $a$ is zero.

\medskip

Motivated by this fact we study in this paper the existence of weak solutions
for a degenerate elliptic operator in a bounded domain with homogeneous
Dirichlet boundary condition and with the additional condition that our solutions are zero
 (in the sense of trace) on the set where 
 $a$ vanishes.
More specifically the problem under study is the following.

Let $\Omega\subset\mathbb R^{N}, N\geq2$ be a smooth and bounded domain,
  $a\in C(\overline{\Omega})$, $a\geq 0$ and $f\in C(\R)$ are functions satisfying:
\begin{enumerate}[label=(a\arabic*),ref=a\arabic*]
\item\label{a1} $a^{-1}(0)=\cup_{l=1}^{k}\Gamma_l\subset\Omega$ is the disjoint union of a finite number $k$ of compact, connected, without boundary and $1$-codimensional smooth submanifolds $\Gamma_l$ of $\R^N$,
\item\label{a2} $a\in A_2$ (the standard Muckenhoupt class) and $1/a\in L^{t}(\Omega)$, for some $t>N/2$,
\end{enumerate}
and
\begin{enumerate}[label=(f\arabic*),ref=f\arabic*]
\item\label{f1} $f$ has a strict local minimum in $s=0$ with $f(0)=0$, and 
there exists $s_{\ast}>0$ such that  $f(s_{\ast})=0$ and $f>0$ in $(0,s_{\ast})$,
\item\label{f2} there exists $\gamma=\lim_{t\to 0^{+}} f(s)/t>0$ and $a_{M_j}:=\textrm{max}_{x\in\overline{D}_{j}}a(x)<\gamma/\lambda_1(D_j)$, where $\lambda_{1}(D_j)$ is the first eigenvalue of the Dirichlet Laplacian in $D_j$ and $D_j$ stands for any connected component of $\Omega\backslash a^{-1}\{0\}$.
\end{enumerate}
Consider the problem
 \begin{equation}\label{P}\tag{P}
\left \{ \begin{array}{ll}
- \textrm{div}(a(x)\nabla u)= f(u) & \mbox{in $\Omega$,}\medskip \\
u=0 & \mbox{on $\partial\Omega\cup a^{-1}(0)$}
\end{array}\right.
\end{equation}
The requirement that $u$ vanishes also on the set $a^{-1}(0)$ is motivated
 by the previous example.
 
 \medskip

A {\sl weak solution of \eqref{P}} is a function $u_{*}\in W^{1,1}_{0}(\Omega\setminus a^{-1}(0))\cap L^{\infty}(\Omega)$ such that
 $$\int_{\Omega}a(x)\nabla u_{*}\nabla \varphi =\int_{\Omega}f(u_{*})\varphi , \ \ \forall  \varphi\in C_{c}^{\infty}(\Omega\backslash a^{-1}(0)).
 $$
Note that, since $a\in C(\overline{\Omega})$ and $f\in C(\R)$, the 
above identity makes sense.
The choice of the space $W_{0}^{1,1}(\Omega\setminus a^{-1}(0))$ in place of the 
more common space $H^{1}_{0}(\Omega\setminus a^{-1}(0))$ is due to the fact that we do not
know if the gradient of the solution $u_{*}$ we find is in $L^{2}(\Omega \setminus a^{-1}(0))$.

\medskip

Before to continue, let us make few comments on the assumptions.
First of all, note that we are just assuming the continuity of $a$ and, 
in contrast to our motivating problem \eqref{eq:distrib}, the function $f$
is also allowed to vanish in many points (assumption \eqref{f1}); however
there is a relation of its first right derivative in zero with the function $a$
(assumption \eqref{f2}).

The class $A_{2}$ which appears in assumption \eqref{a2} is the Muckenhoupt class.
We prefer do not recall the right definition here
(see the next Section) but roughly speaking it gives a condition on the summability of
$a$ and $1/a$ and it seems the right class to work with and define reasonable
weighted Sobolev spaces for such a problem.
 
Finally it is worth to say that assumption \eqref{a1} appeared also in \cite{MPP}
where the authors study an operator
of type $\textrm{div}(A(x) \nabla u)$, for a suitable matrix $A$ which can vanish.
They are interested actually in establishing Poincar\'e type inequalities for such a degenerate operator.

\begin{remark}
It is easy to exhibits example of functions $a$ satisfying our assumptions.
Let $\Omega = B_{2}(0)$ be the  ball entered in $0$ in $\mathbb R^{N}, N\geq2$ of radius $2$.

Take a radial function whose profile in the radial variable has zeroes of order less then one, for example  
$$a(r) = 
\begin{cases}
\sqrt[3]{1-r^{2}} & \mbox { if } r\in[0,1],  \smallskip \\
\sqrt{(1-r)(r-2)} & \mbox { if } r\in(1,2]. \\
\end{cases}
$$
Then it is easy to check that $a\in A_{2}, 1/a\in L^{t}(\Omega)$
 for any $t\in[1,2N)$ and then \eqref{a2} holds. The function is of course not of class $C^{1}$
 where it vanishes.

\medskip
 
 Similarly, consider a  function  which is strictly positive in the center of the ball $\Omega$
 and whose radial profile  is $C^{1}$, with null derivative in the origin, and of type
 $$a(r) = 
\begin{cases}
{\rm smooth  \ and \ positive } & \mbox { if } r\in[0,1/5],  \smallskip \\
\displaystyle\frac{(r-1)^{2}}{\sqrt{|r-1|}}& \mbox { if } r\in(1/5,6/5],  \smallskip \\
{\rm smooth  \ and \ positive} & \mbox { if } r\in(6/5,11/5),  \smallskip \\
\displaystyle\frac{(r-2)^{2}}{\sqrt{|r-2|}}& \mbox { if } r\in[11/5,2].
\end{cases}
$$
It is easy to check that $a\in A_{2}, 1/a\in L^{t}(\Omega)$ for any $t\in [1,2N/3)$
and then  \eqref{a2} holds. 
Such a function is   $C^{1}$ in all $\Omega$, and  $C^{2}$ 
in $\Omega$ except where it vanishes.
 
 \medskip

Note however that functions that are $C^{2}$ where they vanish  are  not allowed by our hypothesis.
Indeed, if $a$ were  positive and of class $C^{2}$ in a neighbourhood of $x_{0}\in \Omega$
where $a(x_{0})=0$, then by the Taylor expansion,
 $$a(x) \leq C |x-x_{0}|^{2} \quad \text{in a neighbourhood $U_{x_{0}}$ of }  x_{0}.$$
 It follows 
 $$\frac{1}{a(x)^{N/2}} \geq \frac{C}{|x-x_{0}|^{N}} \quad \text{ in }U_{x_{0}}$$
 then $1/a \notin L^{N/2}(\Omega)$ and hence \eqref{a2} cannot be satisfied.
\end{remark}

\medskip

To state our main result let us fix some notations. 

%

Denote $\Gamma_{k+1}=\partial\Omega$.
Let $\pi_{0}(\Omega\setminus a^{-1}(0))$ be the usual
quotient space of  $\Omega\setminus a^{-1}(0)$
under the equivalence relation which identifies points
that can be joint with a continuous arch.
Then 
$\chi:=\textrm{card }\pi_{0}(\Omega\setminus a^{-1}(0))\geq1$
gives the number of  connected components of $\Omega\setminus a^{-1}(0)$.
Let us  write 
$$
\chi=
\sum_{i=1}^{m}j_i, \qquad  j_i\in \mathbb N, \ j_{1} \geq1,
$$
where $j_{i}$ stands for the number of subdomains  of $\Omega\backslash a^{-1}(0)$ whose boundary
is made  exactly by $i$ connected $1$-codimensional submanifolds of $\mathbb R^{N}$.
These domains are denoted with 
$\mathcal{A}^{(i)}_1, \mathcal{A}^{(i)}_2, \ldots, \mathcal{A}^{(i)}_{j_{i}}$. 
See the Figure \ref{fig1} and Figure \ref{fig2} for two examples in dimension two.

%
%
%

\begin{figure}[b]
\centering%
 \includegraphics[scale=0.5]{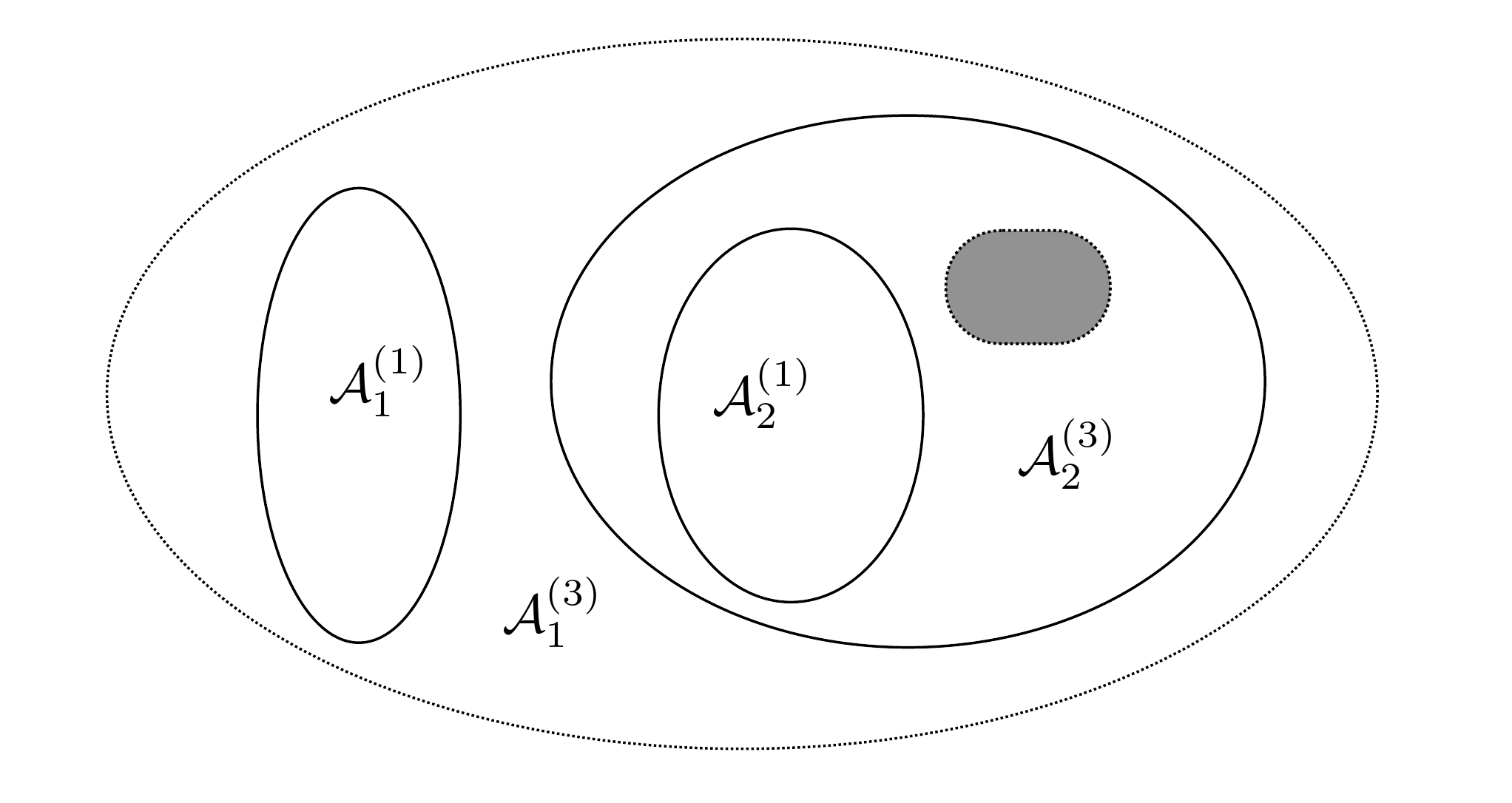}
\caption{\label{fig1}
Example  of a domain (with one grey hole) where $a^{-1}(0)=\sum_{i=1}^{3}\Gamma_{i}$.
In this case $\chi=4, j_{1}=2, j_{2}=0, j_{3}=2$.}
\end{figure}

\begin{figure}[h]
\centering%
\includegraphics[scale=0.5]{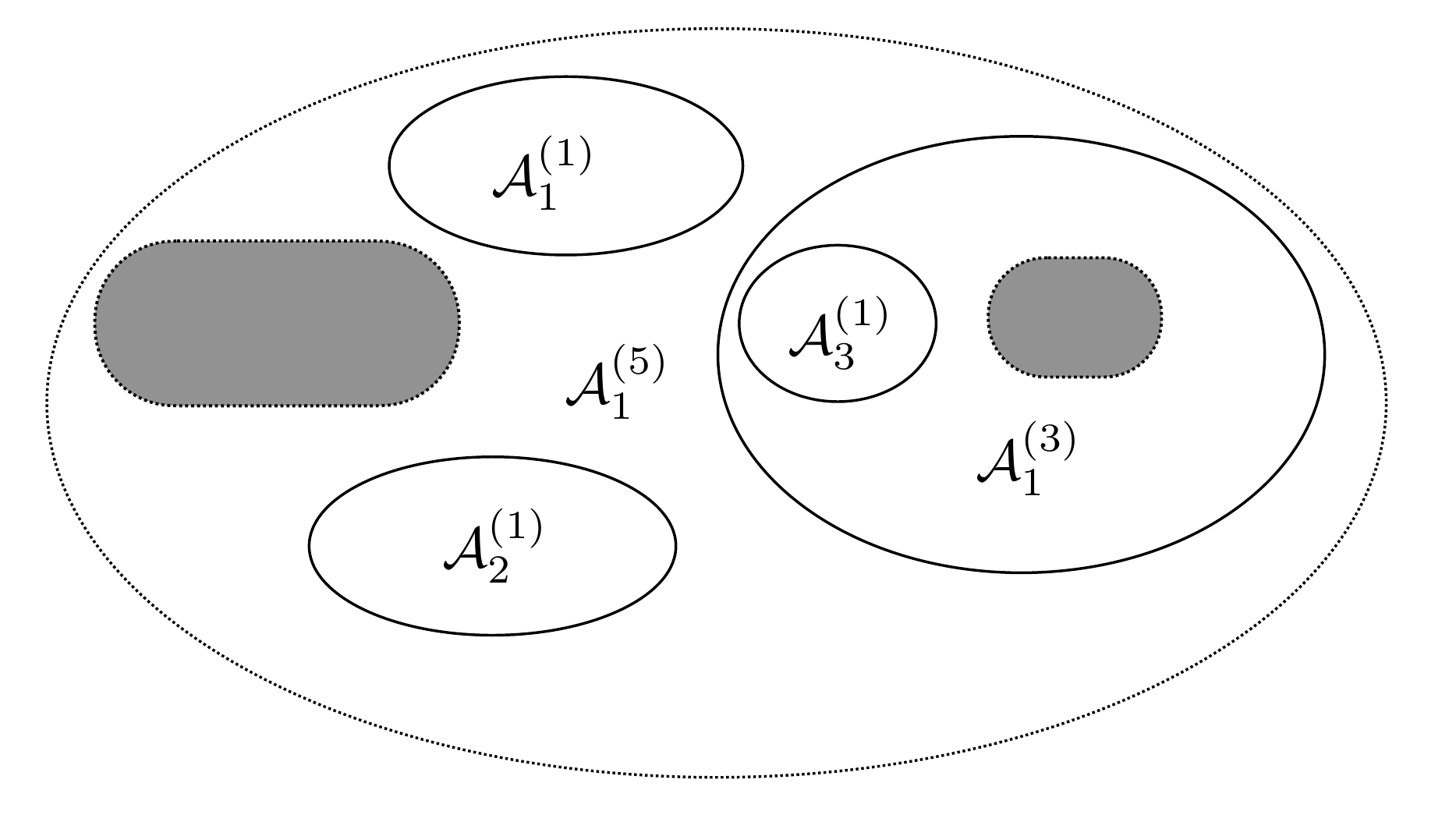}
\caption{\label{fig2}
Example  of a domain (with two grey holes) where $a^{-1}(0)=\sum_{i=1}^{4}\Gamma_{i}$.
In this case $\chi=5, j_{1}=3, j_{2}=0, j_{3}=1, j_{4}=0, j_{5}=1$.}
\end{figure}

\medskip

Our result states that the  number of solutions of \eqref{P} 
is related to $\chi.$

\begin{theorem}\label{th:main}
Suppose that \eqref{a1},\eqref{a2}, \eqref{f1}, \eqref{f2} hold. Then, problem \eqref{P} has at least $2^{\chi}-1$ nonnegative (and nontrivial) weak solutions. 
More specifically, the number of positive solutions with $n$ bumps, $n\in\{1,\ldots, \chi \}$, is given by 
the binomial coefficient $\frac{\chi!}{n!(\chi-n)!}.$
\end{theorem}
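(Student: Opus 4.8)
The plan is to reduce \eqref{P} to a single building block---a positive solution living on one connected component of $\Omega\setminus a^{-1}(0)$ and vanishing on its boundary---and then to assemble the $2^\chi-1$ solutions by gluing such blocks across the degeneracy set $a^{-1}(0)$, where every block vanishes.

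First I would fix a connected component $D$ of $\Omega\setminus a^{-1}(0)$ and work in the weighted space $H_a(D)$, the closure of $C_c^\infty(D)$ for the norm $\|u\|_a^2=\int_D a|\nabla u|^2$; membership in $H_a(D)$ already encodes the homogeneous datum on all of $\partial D\subset\partial\Omega\cup a^{-1}(0)$. Assumption \eqref{a2} that $a\in A_2$ provides a weighted Poincar\'e inequality (so $\|\cdot\|_a$ is a norm) together with a compact embedding, while $1/a\in L^t(\Omega)\subset L^1(\Omega)$ gives, by Cauchy--Schwarz,
$$\int_D|\nabla u|=\int_D a^{1/2}|\nabla u|\,a^{-1/2}\le\|u\|_a\Big(\int_D a^{-1}\Big)^{1/2},$$
hence a continuous inclusion $H_a(D)\hookrightarrow W^{1,1}_0(D)$ and, through the Sobolev embedding of $W^{1,1}_0$, a compact inclusion into $L^1(D)$. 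I would then truncate $f$, setting $\tilde f=f$ on $[0,s_\ast]$ and $\tilde f=0$ elsewhere, and minimise
$$J(u)=\tfrac12\int_D a|\nabla u|^2-\int_D\tilde F(u),\qquad \tilde F(s)=\int_0^s\tilde f.$$
This $J$ is coercive (as $\tilde F$ is bounded) and weakly lower semicontinuous, the nonlinear term being weakly continuous because $\tilde F$ is Lipschitz and $H_a(D)\hookrightarrow L^1(D)$ compactly, so it attains its infimum.

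The role of \eqref{f2} is to force this infimum to be negative, and thereby the minimiser to be nontrivial. Testing along the first Dirichlet eigenfunction $\phi_1>0$ of $-\Delta$ on $D$---which lies in $H_a(D)$ because the weight is bounded on $\overline D$---and using $\tilde F(s)=\tfrac{\gamma}{2}s^2+o(s^2)$ as $s\to0^+$, I expect
$$J(t\phi_1)=\frac{t^2}{2}\Big(\int_D a|\nabla\phi_1|^2-\gamma\int_D\phi_1^2\Big)+o(t^2),\qquad t\to0^+,$$
where $\int_D a|\nabla\phi_1|^2\le\big(\max_{\overline D}a\big)\lambda_1(D)\int_D\phi_1^2$, so the bracket is strictly negative exactly by \eqref{f2}. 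Hence $\inf_{H_a(D)}J<0$ is attained at some $u_0\neq0$. Replacing $u_0$ by $u_0^+$ does not increase $J$ (since $\tilde F\equiv0$ on the negative axis), so $u_0\ge0$; testing the Euler--Lagrange equation with $(u_0-s_\ast)^+$ forces $u_0\le s_\ast$. Therefore $\tilde f(u_0)=f(u_0)$ and $u_0$ solves the original equation on $D$. As $a$ is bounded below by a positive constant on each compact subset of $D$, the operator is uniformly elliptic in the interior; interior regularity and the strong maximum principle (using $f(u_0)\ge0$, with $f>0$ on $(0,s_\ast)$) then give $u_0>0$ in $D$, while the Cauchy--Schwarz bound places $u_0\in W^{1,1}_0(D)\cap L^\infty(D)$.

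Finally I would perform the combinatorial assembly. Labelling the components $D_1,\dots,D_\chi$ and the associated positive blocks $u_1,\dots,u_\chi$ (each extended by $0$ to all of $\Omega$), for every nonempty $S\subseteq\{1,\dots,\chi\}$ put $u_S=\sum_{j\in S}u_j$. Then $0\le u_S\le s_\ast$ and $u_S\in W^{1,1}_0(\Omega\setminus a^{-1}(0))\cap L^\infty(\Omega)$, because $\Omega\setminus a^{-1}(0)$ is the disjoint union of the $D_j$. For $\varphi\in C_c^\infty(\Omega\setminus a^{-1}(0))$ the support splits into compact pieces $K_j\Subset D_j$, whence
$$\int_\Omega a\nabla u_S\nabla\varphi=\sum_{j\in S}\int_{D_j}a\nabla u_j\nabla\varphi=\sum_{j\in S}\int_{D_j}f(u_j)\varphi=\int_\Omega f(u_S)\varphi,$$
the indices $j\notin S$ contributing nothing since there $u_S\equiv0$ and $f(0)=0$. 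Thus $u_S$ is a weak solution of \eqref{P}, positive on exactly the $|S|$ components indexed by $S$, i.e. a solution with $|S|$ bumps; distinct subsets have distinct supports and hence give distinct solutions. Consequently there are $\binom{\chi}{n}$ solutions with $n$ bumps, and summing over $n\in\{1,\dots,\chi\}$ yields $2^\chi-1$ nonnegative nontrivial solutions. I expect the single-component analysis to be the real obstacle---building the weighted space, securing the compactness that makes $J$ attain a negative infimum, and promoting the abstract minimiser to a positive $W^{1,1}_0\cap L^\infty$ solution near the degeneracy set; the gluing and counting are then essentially formal.
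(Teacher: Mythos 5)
Your proposal is correct and follows essentially the same route as the paper: solve the problem on each connected component of $\Omega\setminus a^{-1}(0)$ by minimizing a truncated functional in the weighted space $H^1_0(D,a)$, use the first Dirichlet eigenfunction together with \eqref{f2} to make the infimum negative, truncate to obtain $0\le u\le s_{\ast}$, then extend by zero and sum over nonempty subsets of components to get the $2^{\chi}-1$ solutions and the binomial count. The only notable variations are that you obtain weak continuity of the potential term via the elementary chain $H^1_0(D,a)\hookrightarrow W^{1,1}_0(D)\hookrightarrow\hookrightarrow L^1(D)$ rather than the paper's weighted compact embedding into $L^2(D,a)$, and you add an interior regularity/strong maximum principle step yielding strict positivity on each bump, which the paper's proof does not carry out.
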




We point out that we will use variational methods to prove our result
and we will work in the weighted Sobolev space $H^{1}_{0}(\Omega,a)$;
so the solutions we find actually will belong to this space.


\medskip

The paper is organised as follows.  In the next Section \ref{sec:wellknown}
we recall some basic facts on weighted Sobolev spaces to establish the framework of our problem.
In Section \ref{sec:prelim} a suitable problem is solved which will be the main ingredient
to prove our main result in the last Section \ref{sec:main}.

%
\medskip\subsection*{Notations} As a matter of notations, 
in all the paper we denote with $W^{m,p}(\Omega)$ the usual Sobolev spaces. Whenever $p=2$
we use the notation $H^{m}(\Omega)$. Finally $H^{1}_{0}(\Omega)$ is the closure of the test functions
with respect to the norm in $H^{1}(\Omega)$.
Other notations will be introduced whenever we need.

\section{Some well known facts}\label{sec:wellknown}

In this section we will give some preliminary facts on suitable weighted Sobolev spaces we will use later.
For more details  and applications of weighted Sobolev spaces,
which is the right context to study degenerate elliptic operators,
we refer the reader to \cite{FKS,GU, KJF,K,M,KO}, for instance.

Along  this section

\begin{itemize}
\item[1.] $\Omega\subset\R^{N}$ is a smooth and bounded domain, and  \medskip
\item[2.]  $h:\Omega\to[0,+\infty)$ 
satisfies
$$
\sup\left(\frac{1}{|B|}\int_{B}h(x) \right)\left(\frac{1}{|B|}\int_{B}h(x)^{-\frac{1}{p-1}}\right)^{p-1}\leq C, \quad p>1,
$$
where the supremum is taken over all the balls $B\subset \Omega$.  In other words,
$h$ belongs to the so called Muckenhoupt class $A_p$ (see \cite{M}).
The right hand side of the  inequality above is known as the $A_p$-constant of $h$. 

\end{itemize}

For each $p\geq 1$,  $L^{p}(\Omega, h)$ is the Banach space of all measurable functions $u:\Omega\to\R$, for which
$$
|u|_{L^{p}(\Omega, h)}=\left(\int_{\Omega}h(x)|u|^{p}\right)^{1/p}<\infty.
$$
Whenever $h$ is in the $A_{p}$ class, $L^{p}(\Omega, h)\subset L^{1}_{loc}(\Omega)$ and then it makes sense to speak about weak derivatives and Sobolev spaces.
By definition, the weighted Sobolev space $H^{1}(\Omega, h)$ is the set of functions $u\in L^{2}(\Omega, h)$ such that the  (weak) derivatives of first order are all  in $L^{2}(\Omega, h)$. 
The (squared) norm in $H^{1}(\Omega, h)$ is
$$
\|u\|^{2}_{H^{1}(\Omega, h)}=\int_{\Omega}h(x) \left(|\nabla u|^{2}+|u|^{2}\right).
$$
It can be proved that $H^{1}(\Omega, h)$ is the closure if $C^{\infty}(\overline\Omega)$ with respect to the previous norm. As usual, $H^{1}_{0}(\Omega, h)$ is the closure of $C_{c}^{\infty}(\Omega)$ with respect to the 
 norm defined by
\begin{equation}\label{eq:norma0}
\|u\|^{2}_{H^{1}_{0}(\Omega, h)}=\int_{\Omega}h(x)|\nabla u|^{2}.
\end{equation}

Both $H^{1}(\Omega, h)$ and $H^{1}_{0}(\Omega, h)$ are Hilbert spaces
containing the positive and negative parts of each of their elements (see \cite[Corollary 2.1]{FKS}).
Since $h$ may vanish somewhere on $\overline \Omega$, the weighted Sobolev spaces
are not isomorphic the the ``usual'' ones.

\begin{theorem}(The weighted Sobolev inequality)
There exists positive constants $C_{\Omega}$ and $\delta$, such that for all $u\in C_{c}^{\infty}(\Omega)$ and $1\leq\theta\leq N/(N-1)+\delta$,
$$
|u|_{L^{2\theta}(\Omega, h)}\leq C_{\Omega}|\nabla u|_{L^{2}(\Omega, h)}.
$$
\end{theorem}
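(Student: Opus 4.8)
The plan is to reduce the inequality to a weighted mapping property of the Riesz potential and then to exploit the self-improving integrability of $A_2$ weights. First I would recall the elementary pointwise representation: for every $u\in C_c^\infty(\Omega)$ and $x\in\Omega$,
$$|u(x)|\leq C_N\int_\Omega \frac{|\nabla u(y)|}{|x-y|^{N-1}}\,dy=:C_N\,I_1(|\nabla u|)(x),$$
obtained by integrating $\nabla u$ along rays issuing from $x$ and averaging over directions (here one uses that $u$ has compact support in $\Omega$). In this way the whole problem is transferred to showing that the fractional integral $I_1$ is bounded from $L^2(\Omega,h)$ into $L^{2\theta}(\Omega,h)$ for $\theta$ in the asserted range, with norm depending only on $\Omega$, $N$ and the $A_2$-constant of $h$.

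The surplus integrability encoded in $\delta>0$ comes from the fundamental self-improving property of Muckenhoupt weights. Since $h\in A_2$ it is doubling, $h(2B)\leq C\,h(B)$, and it satisfies a reverse H\"older inequality: there are $\sigma>0$ and $C>0$ with
$$\left(\frac{1}{|B|}\int_B h^{1+\sigma}\right)^{\frac{1}{1+\sigma}}\leq\frac{C}{|B|}\int_B h$$
for every ball $B\subset\Omega$; equivalently $h\in A_{2-\eta}$ for some $\eta>0$. This extra summability is what pushes the admissible exponent $2\theta$ strictly beyond the base value $2N/(N-1)$, the small surplus $2\delta$ being quantified in terms of $\sigma$ (or $\eta$) and $N$.

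To establish the weighted boundedness of $I_1$ I would work locally and then patch. Writing $h(B):=\int_B h$, one decomposes $I_1 f(x)$ over the dyadic annuli centred at $x$ and combines the reverse H\"older inequality with the $A_2$ condition to obtain the normalised two-weight estimate
$$\left(\frac{1}{h(B)}\int_B |I_1 f|^{2\theta}\,h\right)^{\frac{1}{2\theta}}\leq C\,r_B\left(\frac{1}{h(B)}\int_B |f|^{2}\,h\right)^{\frac{1}{2}}$$
on each ball $B$ of radius $r_B$, which is exactly the local Sobolev inequality of Fabes--Kenig--Serapioni \cite{FKS}. Since $\Omega$ is bounded one may enclose it in a single fixed ball and, working with $f=|\nabla u|$, absorb the radius and the factor $h(B)$ into one constant $C_\Omega$; this yields the stated global form for all $u\in C_c^\infty(\Omega)$.

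The hard part is the two-weight bound for the fractional integral: one must control $I_1$ between the weighted Lebesgue spaces while tracking precisely how the reverse H\"older exponent $\sigma$ and the $A_2$-constant fix the admissible range of $\theta$ (hence the value of $\delta$). This is the technical core of \cite{FKS}, carried out there through a distribution-function and dyadic analysis of $I_1$ in the space of homogeneous type $(\Omega,|\cdot|,h\,dx)$; rather than reprove it I would invoke that result directly.
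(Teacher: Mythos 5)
Your proposal is correct and takes essentially the same route as the paper, which offers no proof of its own and simply points to \cite{FKS} (Theorem 1.3); your reduction via the pointwise bound $|u|\leq C_N I_1(|\nabla u|)$ together with the reverse H\"older self-improvement of $A_2$ weights is an accurate sketch of how that cited proof goes, and you defer the two-weight fractional integral estimate to the same source. The only point worth noting is that the paper takes the $A_2$ condition over balls $B\subset\Omega$ only, so strictly speaking one must either extend $h$ to an $A_2$ weight on a ball containing $\Omega$ or, as you indicate, work in the space of homogeneous type $(\Omega,|\cdot|,h\,dx)$ before invoking \cite{FKS}.
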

See \cite[Theorem 1.3]{FKS} for a proof. In particular from this results it hods that the quantity defined in \eqref{eq:norma0}  gives a norm on $H^{1}_{0}(\Omega, h)$
equivalent to $\|\cdot\|_{H^{1}(\Omega,h)}$.

The next result is also well known (see \cite[Theorem 2.8.1]{KJF}).
\begin{theorem}\label{teo2}
If $u_{n}\to u$ in $L^{p}(\Omega, h), 1<p<\infty$, then there exists a subsequence $\{u_{n_{k}}\}$ and a function $v\in L^{p}(\Omega, h)$ such that
\begin{enumerate}
\item[$(i)$] $u_{n_{k}}(x)\to u(x), \ n_{k}\to\infty, h-a.e. \ \mbox{on} \ \Omega$; \medskip
\item[$(ii)$] $|u_{n_{k}}(x)|\leq v(x), h-a.e. \ \mbox{on} \ \Omega$.
\end{enumerate}
\end{theorem}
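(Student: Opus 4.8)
The plan is to recognise that $L^{p}(\Omega,h)$ is nothing but the Lebesgue space $L^{p}$ of the Borel measure $d\mu:=h(x)\,dx$ on $\Omega$, and that the phrase ``$h$-a.e.'' means precisely ``$\mu$-a.e.''. Under this reading the statement is the classical partial converse to the dominated convergence theorem for an abstract measure space, and the $A_{p}$ property of $h$ plays no role: the argument is purely measure-theoretic. In particular the set $\{h=0\}$, where the weight degenerates, is $\mu$-null and therefore never interferes with any of the $h$-a.e. conclusions.

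First I would use that a convergent sequence in $L^{p}(\Omega,h)$ is Cauchy, and extract a subsequence $\{u_{n_{k}}\}$ that converges rapidly, say with
$$|u_{n_{k+1}}-u_{n_{k}}|_{L^{p}(\Omega,h)}\leq 2^{-k}\quad\text{for all }k.$$
Then I would set
$$v:=|u_{n_{1}}|+\sum_{k=1}^{\infty}|u_{n_{k+1}}-u_{n_{k}}|,$$
as a nonnegative (a priori possibly infinite) measurable function. Applying Minkowski's inequality to the partial sums and then monotone convergence yields
$$|v|_{L^{p}(\Omega,h)}\leq |u_{n_{1}}|_{L^{p}(\Omega,h)}+\sum_{k=1}^{\infty}2^{-k}<\infty,$$
so that $v\in L^{p}(\Omega,h)$ and, in particular, $v(x)<\infty$ for $h$-a.e. $x\in\Omega$.

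The two claimed conclusions then follow at once. Since the series defining $v$ converges absolutely ($h$-a.e.), the telescoping partial sums $u_{n_{1}}+\sum_{j=1}^{k-1}(u_{n_{j+1}}-u_{n_{j}})=u_{n_{k}}$ converge $h$-a.e. to some limit; because $\{u_{n_{k}}\}$ also converges to $u$ in $L^{p}(\Omega,h)$, this pointwise limit must coincide with $u$ for $h$-a.e. $x$, which is $(i)$. For $(ii)$ the same telescoping estimate gives $|u_{n_{k}}|\leq |u_{n_{1}}|+\sum_{j=1}^{k-1}|u_{n_{j+1}}-u_{n_{j}}|\leq v$ pointwise, the required domination.

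I do not expect a genuine obstacle here, as the only structure used is that $h\,dx$ is a measure; the sole point worth checking carefully is that identifying the notion ``$h$-a.e.'' with ``$\mu$-a.e.'' is legitimate, which holds because $\mu$ and Lebesgue measure share the same null sets away from the degeneracy locus $\{h=0\}$, and the latter is itself $\mu$-null.
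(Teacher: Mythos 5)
Your argument is correct: it is the standard Riesz--Fischer argument (rapidly convergent subsequence, telescoping majorant $v$, Minkowski plus monotone convergence) carried out in the measure space $(\Omega, h\,dx)$, which is exactly the right level of generality since the $A_p$ condition plays no role here. The paper itself gives no proof, simply citing \cite[Theorem 2.8.1]{KJF}, and the proof in that reference is essentially the one you wrote, so there is nothing to reconcile.
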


Finally, we will enunciate a compact embedding type result for the weighted Sobolev spaces
$H^{1}(\Omega, h)$. See e.g. \cite{GU} for the details.

\begin{theorem}(Compact embeddings)\label{teo3}
Let $1\leq s\leq r<Nq/(N-q)$, $q\leq 2$ and
$$
K(h)=\max\left\{ |h^{-\frac{1}{2}}|_{L^{\frac{2q}{2-q}}(\Omega)}, |h^{\frac{1}{s}}|_{L^{\frac{rs}{r-s}}(\Omega)}\right\}<\infty.
$$
Then, the space $H^{1}(\Omega, h)$ is compactly embedded in $L^{s}(\Omega, h)$.
\end{theorem}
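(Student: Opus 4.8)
The plan is to factor the claimed compact embedding through an \emph{unweighted} Sobolev space and apply the classical Rellich--Kondrachov theorem. Concretely, I would prove the chain
$$
H^{1}(\Omega, h) \hookrightarrow W^{1,q}(\Omega) \hookrightarrow\hookrightarrow L^{r}(\Omega) \hookrightarrow L^{s}(\Omega, h),
$$
where the first and last arrows are \emph{continuous} embeddings obtained by two applications of H\"older's inequality (one using each of the two factors in $K(h)$), and the middle arrow is the \emph{compact} embedding of the standard theory, which is exactly where the hypothesis $r<Nq/(N-q)$ is consumed. Since $h\in A_p$ guarantees $L^{2}(\Omega,h)\subset L^{1}_{loc}(\Omega)$, a function $u\in H^{1}(\Omega,h)$ together with its weighted weak derivatives lies in $L^{1}_{loc}(\Omega)$, so the weighted weak gradient coincides with the ordinary distributional gradient and the inclusion $H^{1}(\Omega,h)\subset W^{1,q}(\Omega)$ is meaningful.

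For the first arrow, take $u\in H^{1}(\Omega,h)$ and write, for $q<2$,
$$
\int_{\Omega}|\nabla u|^{q}=\int_{\Omega}\bigl(h^{1/2}|\nabla u|\bigr)^{q}\,h^{-q/2}
\leq\left(\int_{\Omega}h|\nabla u|^{2}\right)^{q/2}\left(\int_{\Omega}h^{-q/(2-q)}\right)^{(2-q)/2},
$$
by H\"older with exponents $2/q$ and $2/(2-q)$. The last factor equals $|h^{-1/2}|_{L^{2q/(2-q)}(\Omega)}^{q}$, which is finite by the first half of $K(h)<\infty$; the same computation applied to $u$ in place of $\nabla u$ bounds $|u|_{L^{q}(\Omega)}$. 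Hence $u\in W^{1,q}(\Omega)$ with $\|u\|_{W^{1,q}(\Omega)}\leq C(K(h))\,\|u\|_{H^{1}(\Omega,h)}$, and a bounded sequence in $H^{1}(\Omega,h)$ is bounded in $W^{1,q}(\Omega)$.

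For the last arrow, given any $w\in L^{r}(\Omega)$, H\"older with the conjugate exponents $r/s$ and $r/(r-s)$ (legitimate since $s\leq r$) gives
$$
\int_{\Omega}h|w|^{s}\leq\left(\int_{\Omega}|w|^{r}\right)^{s/r}\left(\int_{\Omega}h^{r/(r-s)}\right)^{(r-s)/r}
=|w|_{L^{r}(\Omega)}^{s}\,\bigl|h^{1/s}\bigr|_{L^{rs/(r-s)}(\Omega)}^{s},
$$
so that $|w|_{L^{s}(\Omega,h)}\leq |h^{1/s}|_{L^{rs/(r-s)}(\Omega)}\,|w|_{L^{r}(\Omega)}$, finite by the second half of $K(h)<\infty$. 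Putting the three arrows together: a bounded sequence $\{u_{n}\}\subset H^{1}(\Omega,h)$ is bounded in $W^{1,q}(\Omega)$, admits a subsequence $\{u_{n_{k}}\}$ converging strongly in $L^{r}(\Omega)$ by Rellich--Kondrachov, and this last estimate upgrades that convergence to convergence in $L^{s}(\Omega,h)$, proving compactness.

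The routine parts are the two H\"older estimates; the step requiring genuine care is the identification ensuring $H^{1}(\Omega,h)\subset W^{1,q}(\Omega)$ as Sobolev functions with matching weak gradients, which rests on the $A_{p}$ property and must be checked before Rellich--Kondrachov can be invoked. One should also handle the borderline exponent $q=2$ separately, since then $2q/(2-q)=\infty$ and the first H\"older step degenerates into the requirement $h^{-1/2}\in L^{\infty}(\Omega)$, i.e.\ $h$ bounded away from zero; for $q<2$ the argument above applies verbatim.
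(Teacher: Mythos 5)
The paper does not actually prove this statement---it is quoted as a known result with a pointer to \cite{GU}---and your factorization $H^{1}(\Omega,h)\hookrightarrow W^{1,q}(\Omega)\hookrightarrow\hookrightarrow L^{r}(\Omega)\hookrightarrow L^{s}(\Omega,h)$, with the two halves of $K(h)$ feeding the two H\"older estimates and $r<Nq/(N-q)$ consumed by Rellich--Kondrachov, is exactly the standard argument underlying the cited result; the exponent bookkeeping checks out, and your remarks on the $A_{p}$-based identification of the weak gradient and on the borderline case $q=2$ address the only delicate points. Your proof is correct and essentially the intended one.
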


\section{Preliminaries: a problem (possibly) degenerate  on the boundary}\label{sec:prelim}
In this section, for future reference, we consider  the following  elliptic problem
\begin{equation}\label{PD}\tag{$P_{D}$}
\left \{ \begin{array}{ll}
- \textrm{div}(b(x)\nabla u)= f(u) & \mbox{in $D$,} \medskip \\
u=0 & \mbox{on $\partial D$,}
\end{array}\right.
\end{equation}
where $D\subset \R^{N}$ is a smooth, open and bounded  domain, $b\in C(\overline{D})$, $b(x)>0$ for  
$x\in D$, $b\in A_{2}$ and $1/b\in L^{t}(D), t>N/2$
and $f$ which satisfies \eqref{f1} and \eqref{f2} (with of course
$\overline \Omega$ replaced by $D$ and $a$
by the function $b$).
The operator $L(u)= -\textrm{div}(b(x)\nabla u)$ is called also $b-$elliptic.

A weak solution for \eqref{PD} is a function $u_\ast\in W^{1, 1}_{0}(D)\cap L^{\infty}(D)$ such that
\begin{equation}\label{2}
\int_{D}b(x)\nabla u_\ast\nabla v =\int_{D}f(u_\ast)v , \ \ \forall  v\in C_{c}^{\infty}(D).
\end{equation}
Observe that $b$ may eventually be zero somewhere on the boundary $\partial D$.

We will find the solution of \eqref{PD} working in the space $H^{1}_{0}(D,b).$
Note that such a space is contained into $W_{0}^{1, 2t/(1+t)}(D)$, where $t>N/2$ is given in \eqref{a2},
and then $2t/(1+t)>1$. 
Indeed, for $u\in H^{1}_{0}(D,b)$ we have, by the H\"older inequality,
\begin{eqnarray*}
\int_{D}|\nabla u|^{2t/(t+1)} &=& \int_{D}\left(\frac{1}{b^{t/(t+1)}}\right)(b^{t/(t+1)}|\nabla u |^{2t/(t+1)}) \\
&\leq& \left|\frac{1}{b}\right|^{t/(1+t)}_{L^{t}(D)}\|u \|_{H_{0}^{1}(D, b)}^{2t/(1+t)}<\infty
\end{eqnarray*}
and hence $H^{1}_{0}(D,b)\hookrightarrow W_{0}^{1, 2t/(1+t)}(D)$.
As it follows by the next proof, the solution will be also bounded.

The main result of this section is as follows.

\begin{theorem}\label{teo4}
Under the previous assumptions,
 problem \eqref{PD} has at least a nonnegative and nontrivial weak solution.
\end{theorem}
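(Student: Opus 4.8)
The plan is to solve \eqref{PD} by direct minimization of the natural energy associated to a truncated version of $f$, working in the Hilbert space $H^{1}_{0}(D,b)$. First I would replace $f$ by
\[
\hat f(s)=\begin{cases} 0 & s\leq 0,\\ f(s) & 0<s<s_{\ast},\\ 0 & s\geq s_{\ast},\end{cases}
\]
which is continuous and bounded because of \eqref{f1} (recall $f(0)=f(s_{\ast})=0$), and set $\hat F(s)=\int_{0}^{s}\hat f$. One then considers the functional
\[
I(u)=\frac{1}{2}\int_{D}b(x)|\nabla u|^{2}-\int_{D}\hat F(u),\qquad u\in H^{1}_{0}(D,b).
\]
Since $\hat F$ is bounded, $I(u)\geq \frac{1}{2}\|u\|^{2}_{H^{1}_{0}(D,b)}-\|\hat F\|_{\infty}|D|$, so $I$ is coercive and bounded below. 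Any critical point we produce will automatically be a weak solution of \eqref{PD} once we verify $0\leq u\leq s_{\ast}$, because on that range $\hat f(u)=f(u)$; moreover it will lie in $W^{1,1}_{0}(D)\cap L^{\infty}(D)$ thanks to the embedding $H^{1}_{0}(D,b)\hookrightarrow W^{1,2t/(t+1)}_{0}(D)$ recalled above together with the bound $u\leq s_{\ast}$.

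Next I would establish weak lower semicontinuity and the existence of a minimizer. The quadratic term is convex and continuous, hence weakly lower semicontinuous. For the nonlinear term, combining $H^{1}_{0}(D,b)\hookrightarrow W^{1,2t/(t+1)}_{0}(D)$ with the Rellich--Kondrachov theorem (note $2t/(t+1)<2\leq N$, so the embedding into $L^{1}(D)$ is compact), any weakly convergent sequence $u_{n}\rightharpoonup u_{0}$ converges strongly in $L^{1}(D)$; since $\hat F$ is Lipschitz this yields $\int_{D}\hat F(u_{n})\to\int_{D}\hat F(u_{0})$. Hence $I$ is weakly lower semicontinuous and attains its infimum at some $u_{0}\in H^{1}_{0}(D,b)$. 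The boundedness of $\hat f$ also shows $I\in C^{1}$, so $u_{0}$ satisfies $\int_{D}b\nabla u_{0}\nabla v=\int_{D}\hat f(u_{0})v$ for all $v\in H^{1}_{0}(D,b)$. To see the minimum is nontrivial I would test with $t\varphi_{1}$, where $\varphi_{1}>0$ is the first Dirichlet eigenfunction of $-\Delta$ on $D$ (it lies in $H^{1}_{0}(D,b)$ because $b$ is bounded). For small $t>0$ one has $0\leq t\varphi_{1}\leq s_{\ast}$, and using $\hat F(s)=\tfrac{\gamma}{2}s^{2}+o(s^{2})$ from \eqref{f2} together with $\int_{D}|\nabla\varphi_{1}|^{2}=\lambda_{1}(D)\int_{D}\varphi_{1}^{2}$,
\[
I(t\varphi_{1})\leq \frac{t^{2}}{2}\Big(\lambda_{1}(D)\max_{\overline{D}}b-\gamma\Big)\int_{D}\varphi_{1}^{2}+o(t^{2}),
\]
which is strictly negative for $t$ small, since $\max_{\overline{D}}b<\gamma/\lambda_{1}(D)$ by \eqref{f2}. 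Thus $\inf I<0=I(0)$ and $u_{0}\neq 0$.

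It remains to recover $0\leq u_{0}\leq s_{\ast}$, which is where the real care is needed. Because $\hat F(s)=\hat F(s^{+})$, replacing $u_{0}$ by $u_{0}^{+}$ does not increase the energy (the gradient term can only decrease), so minimality forces $\int_{D}b|\nabla u_{0}^{-}|^{2}=0$ and hence $u_{0}\geq 0$; here I use that $H^{1}_{0}(D,b)$ contains the positive and negative parts of its elements, as recalled above. Similarly, comparing $u_{0}$ with $\min\{u_{0},s_{\ast}\}$ and using that $\hat f$ vanishes on $[s_{\ast},\infty)$ (so $\hat F$ is constant there) shows $\nabla u_{0}=0$ a.e.\ on $\{u_{0}>s_{\ast}\}$, whence $u_{0}\leq s_{\ast}$. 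On $0\leq u_{0}\leq s_{\ast}$ we have $\hat f(u_{0})=f(u_{0})$, so $u_{0}$ is a nonnegative, nontrivial weak solution of \eqref{PD} in the sense of \eqref{2}, and $u_{0}\in L^{\infty}(D)$. The main obstacle I anticipate is the interplay between the degeneracy of $b$ and these truncation arguments: one must be sure that $u_{0}^{+}$, $u_{0}^{-}$ and $\min\{u_{0},s_{\ast}\}$ genuinely belong to $H^{1}_{0}(D,b)$ and that the weighted norm controls them, which is precisely what the $A_{2}$ hypothesis and the structural results on $H^{1}_{0}(D,b)$ are there to guarantee.
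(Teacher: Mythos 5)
Your proof is correct and follows the same overall strategy as the paper: truncate $f$, minimize the resulting energy on $H^{1}_{0}(D,b)$ by the direct method, detect nontriviality by testing with the first Dirichlet eigenfunction via \eqref{f2}, and then recover $0\leq u\leq s_{\ast}$ so that the truncation can be removed. Two of your tactical choices differ from the paper's and are worth noting. First, for weak lower semicontinuity the paper invokes the weighted compact embedding $H^{1}(D,b)\hookrightarrow\hookrightarrow L^{2}(D,b)$ (Theorem \ref{teo3}) together with the weighted a.e.-convergence statement (Theorem \ref{teo2}) and dominated convergence; you instead route through the already-established inclusion $H^{1}_{0}(D,b)\hookrightarrow W^{1,2t/(t+1)}_{0}(D)$ and classical Rellich--Kondrachov into $L^{1}(D)$, using that $\hat F$ is Lipschitz. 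Your route is more elementary in that it avoids the weighted embedding machinery entirely, at the (negligible) cost of leaving the weighted space. Second, the paper extends $f$ to be strictly positive on $(-\infty,0)$ and obtains the sign and the bound $u\leq s_{\ast}$ by testing the Euler--Lagrange equation with $u^{-}$ and $(u-s_{\ast})^{+}$, whereas you set the truncation to zero on $(-\infty,0]$ and argue by energy comparison with $u^{+}$ and $\min\{u,s_{\ast}\}$; both arguments rest on the same structural facts about $H^{1}_{0}(D,b)$ (closure under taking positive/negative parts and the weighted Poincar\'e inequality, which rules out nonzero constants), and both leave the membership of $(u-s_{\ast})^{+}$, resp.\ $\min\{u,s_{\ast}\}$, in $H^{1}_{0}(D,b)$ at the same level of implicitness as the paper does.
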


\begin{proof}
Due to the possibly degenerate structure of the problem,
 the suitable functional setting to treat \eqref{PD} is the weighted Sobolev space $H_{0}^{1}(D, b)$. Let $J: H_{0}^{1}(D, b)\to \R$ be the functional
$$
J(u)=\frac{1}{2}\int_{D}b(x)|\nabla u|^{2}-\int_{D}F_{\ast}(u) =:\frac{1}{2}\|u\|^{2}_{H_{0}^{1}(D, b)}-\psi(u),
$$
where $F_{\ast}$ is the primitive of
\begin{equation*}
f_{\ast}(s)=\left \{ \begin{array}{ll}
f(-\beta_{\ast}) & \mbox{ if  \ $s\in (-\infty, -\beta_{\ast}]$,}\\
f(s) & \mbox{if \ $s\in (-\beta_{\ast}, s_{\ast})$,}\\
0 & \mbox{ if  \ $s\in [s_{\ast}, \infty)$,}\\
\end{array}\right.
\end{equation*}
for some $\beta_{\ast}>0$ such that $f>0$ in $[-\beta_{\ast}, 0)$.

Observe that $J$ is well defined in $H_{0}^{1}(D, b)$. In fact, since 
 $f_{\ast}$ is bounded, we have
\begin{equation}\label{3}
\int_{D}|F_{\ast}(u)| \leq C\int_{D}|u|,
\end{equation}
for some positive constant $C$. On the other hand, by H\"older inequality and being $b\in A_2$
\begin{equation}\label{4}
\int_{D}|u|=\int_{D}\frac{1}{b^{1/2}}(b^{1/2}|u|)\leq \left|\frac{1}{b}\right|_{L^{1}(D)}^{1/2}\|u\|_{H_{0}^{1}(D, b)}<\infty,
\end{equation}
for all $u\in H_{0}^{1}(D, b)$. Moreover, since $f_{\ast}$ is continuous, $J\in C^{1}$.

Observe that $J$ is coercive. Indeed, from \eqref{3} and \eqref{4} we deduce
$$
J(u)\geq \frac{1}{2}\|u\|_{H_{0}^{1}(D, b)}^{2}-C\left|\frac{1}{b}\right|_{L^{1}(\Omega)}^{1/2}\|u\|_{H_{0}^{1}(\Omega, b)}.
$$

To prove that $J$ is weakly lower semicontinuous, it is enough to note that if $u_{n}\rightharpoonup u$ in $H_{0}^{1}(D, b)$, then, by \eqref{a2} and being $b\in C(\overline{D})$ the number $K(b)$ in Theorem \ref{teo3} is finite if we choose 
$$q=\frac{2t}{t+1}, \ s=2, \ \ r\in \Big(2, \frac{2Nt}{N(t+1)-2t}\Big).$$
 Therefore, by the compact embedding, we get
$$
u_{n}\to u \ \mbox{\ in $L^{2}(D, b)$}.
$$
From Theorem \ref{teo2}, up to a subsequence, there exists $g\in L^{2}(D, b)$ such that
$$
u_{n}(x)\to u(x) \ \mbox{ and } \  |u_{n}(x)|\leq g(x), \ \ b-\text{a.e. in } D.
$$
Since $b$ is positive in $D$, we obtain
$$
u_{n}(x)\to u(x) \ \mbox{ and  } \ |u_{n}(x)|\leq g(x),  \ \  \text{a.e. in } D.
$$
Consequently,
$$
F_{\ast}(u_{n}(x))\to F_{\ast}(u(x)) \ \ \mbox{a.e. in } D
$$
and
$$
|F_{\ast}(u_{n}(x))|\leq C|u_{n}(x)|\leq Cg(x)  \ \ \mbox{a.e. in } D.
$$
On the other hand, from $\eqref{a2}$
$$
\int_{D}|g(x)| = \int_{D} \frac{1}{b(x)^{1/2}} (b(x)^{1/2}|g(x)|) \leq \left|\frac{1}{b}\right|_{L^{1}(D)}^{1/2}|g|_{L^{2}(D, b)}<\infty,
$$
showing that $g\in L^{1}(D)$. Then by using the Lebesgue dominated convergence theorem, we conclude that
$$
\psi(u_{n})=\int_{D}F_{\ast}(u_{n})\to \int_{D}F_{\ast}(u)=\psi(u).
$$
Thus, $\psi$ is weakly continuous and, consequently, $J$ is weakly lower semicontinuous
 in the Hilbert space $H_{0}^{1}(D, b)$. 
Let $u_{\ast}:\Omega\to\R$ a minimum point of $J$. Since $J$ is $C^{1}$,
$$
\int_{D}b(x)\nabla u_{\ast}\nabla v =\int_{D}f_\ast(u_{\ast})v , \ \ \forall  v\in H_{0}^{1}(D, b),
$$
showing that $u_{\ast}$ is a weak solution of \eqref{PD}.

Now we are going to prove that $u_{\ast}$ is nontrivial. For that, it is enough to realise that 
$J$ takes negatives values.
Indeed let  $e_1$ be a positive eigenfunction associated to the first eigenvalue $\lambda_1(D)$ of Laplacian operator 
in $D$ with homogeneous Dirichlet boundary condition  and consider
$$
\frac{1}{s^{2}}J(se_{1})=\frac{1}{2}\|e_1\|^{2}_{H_{0}^{1}(D, b)}-\int_{D}\frac{F_{\ast}(se_{1})}{(se_1)^{2}} e_{1}^{2},
$$
for each $s>0$. By \eqref{f2}, de L'Hospital rule and Lebesgue dominated convergence theorem, by passing to the limit as $s\to 0^{+}$, we obtain
$$
\lim_{s\to 0^{+}}\frac{1}{s^{2}}J(se_{1})=\frac{1}{2}\int_{D}\left(b(x)-\frac{\gamma}{\lambda_1(D)}\right)|\nabla u|^{2}<0.
$$
Thus, for $s>0$ small enough, we have $J(u_{\ast})\leq J(se_1)<0$, showing that $u_{\ast}$ is nontrivial. 

It follows from \eqref{f1} and the definition of $f_\ast$ that by choosing 
$v=u_{\ast}^{-}:=\min\{u_{\ast}, 0\}$ 
in \eqref{2}, we have
$$
\int_{D}b(x)|\nabla u^{-}_{\ast}|^{2} =\int_{D}f_\ast(u_{\ast})u_{\ast}^{-} \leq 0.
$$
Since $b>0$, we conclude that $\nabla u^{-}_{\ast}=0$ a.e. in $D$. Therefore $u^{-}_{\ast}=c$ a.e. in $D$, for some $c\in\R$. Finally, from $u_{\ast}\in H_{0}^{1}(D, b)$, we have that  $u_{\ast}^{-}=0$ and 
$u_{\ast}=u_{\ast}^{+}:=\max\{u_{\ast}, 0\}\geq 0$. To conclude that $u_\ast\leq s_\ast$, it is enough to choose $v=(u_{\ast}-s_\ast)^{+}$ in \eqref{2} and reasoning in a similar way. Therefore $f_\ast(u_\ast)=f(u_\ast)$,
concluding the proof.
\end{proof}

%

\section{Proof of Theorem \ref{th:main}}\label{sec:main}

Finally we are ready to treat the problem
\begin{equation}\label{P}\tag{P}
\left \{ \begin{array}{ll}
-\textrm{div}(a(x)\nabla u)= f(u) & \mbox{in $\Omega$,}\medskip \\
u=0 & \mbox{on $\partial\Omega\cup a^{-1}\{0\}$}
\end{array}\right.
\end{equation}
and prove Theorem \ref{th:main}.

To take advantage of the degeneracy of $a$ in order to prove existence of multiple solutions to problem \eqref{P}, we will divide the proof in two steps. In the first one will be considered a suitable class of problems \eqref{Pil} with diffusion operator involving coefficients degenerating on the boundary of the domain where the problem is settled, that is, for each  $i\in\{1, \ldots, m\}$ and $l\in\{1, \ldots, j_i\}$, we will look for weak solutions of the problem
\begin{equation}\label{Pil}\tag{$P_{i,l}$}
\left \{ \begin{array}{ll}
-\textrm{div}(a(x)\nabla u)= f(u) & \mbox{in $\mathcal{A}^{(i)}_l$,}\\
u=0 & \mbox{on $\partial \mathcal{A}^{(i)}_l$.}
\end{array}\right.
\end{equation}

In the second one, the solutions obtained for \eqref{Pil} will be used to construct solutions to \eqref{P}, which have different numbers of positive bumps.

\medskip

{\bf Step I:} Existence of $\chi$ one-bump weak solutions to \eqref{Pil}.

\medskip

It follows from \eqref{a1} that each set $\mathcal{A}^{(i)}_l$ is a bounded domain of $\R^{N}$ with a smooth boundary, on which 
function $a$ can be zero. Consequently, Step I is a straightforward consequence of hypotheses \eqref{a2},\eqref{f1},\eqref{f2} 
 and Theorem \ref{teo4} in previous Section. Let us call $u_{i,l}$ the one-bump weak solution obtained to \eqref{Pil}.

\medskip

{\bf Step II:} Existence of $2^{\chi}-1$ nonnegative (and nontrivial) weak solutions to \eqref{P}.

\medskip

Let us consider the extensions $\widetilde{u}_{i, l}$ of $u_{i, l}$ to $\Omega$, that is,
$$
\widetilde{u}_{i, l}(x)=\left \{ \begin{array}{ll}
u_{i, l} & \mbox{in $\mathcal{A}^{(i)}_l$,}\\
0 & \mbox{in $\Omega\backslash\mathcal{A}^{(i)}_l$.}
\end{array}\right.
$$
Since $0\leq u_{i, l}\leq s_{\ast}$, $u_{i, l}\in H_{0}^{1}(\mathcal{A}^{(i)}_l, a_{|_{\mathcal{A}^{(i)}_l}})$ and
$$
\int_{\mathcal{A}^{(i)}_l}|\nabla u_{i, l}| = \int_{\mathcal{A}^{(i)}_l}\left(\frac{1}{a(x)^{1/2}}\right)(a(x)^{1/2}|\nabla u_{i, l}|) \leq \left|\frac{1}{a}\right|^{1/2}_{L^{1}(\mathcal{A}^{(i)}_l)}\|u_{i, l}\|_{H_{0}^{1}(\mathcal{A}^{(i)}_l, a_{|_{\mathcal{A}^{(i)}_l})}}^{2}<\infty,
$$
where in the last inequality we have used the Holder inequality. It is clear that $\widetilde{u_{i}}\in W_{0}^{1,1}(\Omega)\cap L^{\infty}(\Omega)$. Moreover, since $a\in C(\overline{\Omega})$ and $\mathcal{A}^{(i)}_l\subset \Omega\backslash a^{-1}\{0\}$, if $v\in C_{c}^{\infty}(\Omega\backslash a^{-1}\{0\})$ then  $v_{|_{\mathcal{A}^{(i)}_l}}\in H_{0}^{1}(\mathcal{A}^{(i)}_l, a_{|_{\mathcal{A}^{(i)}_l}})$. Thus, since $u_{i, l}$ is a weak solution of \eqref{Pil}, for all 
$v\in C_{c}^{\infty}(\Omega\backslash a^{-1}\{0\})$:
$$
\int_{\Omega}a(x)\nabla\left( \sum_{i, l}\widetilde{u}_{i, l}\right)\nabla v =\sum_{i, l}\int_{\mathcal{A}^{(i)}_l}a(x)\nabla u_{i, l}\nabla (v_{|_{\mathcal{A}^{(i)}_l}}) =\sum_{i, l}\int_{\mathcal{A}^{(i)}_l}f(u_{i, l})v_{|_{\mathcal{A}^{(i)}_l}} =\int_{\Omega}f\left(\sum_{i, l}\widetilde{u}_{i, l}\right)v,
$$
where the summation $\sum_{i, l}$ runs over all the possible combinations of indexes $i, l$, so as to include all the connected components of $\Omega\backslash a^{-1}\{0\}$, showing that $\widetilde{u}_{i, l}$ is a nonnegative and nontrivial weak solution of \eqref{P} for each $i\in\{1, \ldots, m\}$ and $l\in\{1, \ldots, j_i\}$. Since the sum of  $n$ of the previous weak solutions $\widetilde{u}_{i, l}$ ($2\leq n\leq \chi$) is still a solution of \eqref{P} (by \eqref{a1}), the result follows.
Observe finally that, arguing as in Section \ref{sec:prelim},
 the solutions found are in $H^{1}_{0}(\Omega, a)$.

%
%
%

%
%


\begin{thebibliography}{99}



\bibitem{C} J. Chabrowski
{\sl Degenerate elliptic equation involving a subcritical Sobolev exponent}
Portugaliae Mathematica Vol. 53 Fasc. 2 -- 1996



\bibitem{CR}{F.S  C\^irstea, V. R{\v a}dulescu, }
{\sl Multiple solutions of degenerate perturbed elliptic problems involving a subcritical Sobolev exponent, }
Topological Methods in Nonlinear Analysis {\bf 15} (2000), 283--300.


\bibitem{EM}
C.L. Epstein,  R. Mazzeo. 
{\sl Degenerate Diffusion Operators Arising in Population Biology}. 
Annals of Mathematics Studies 185. Princeton, 
NJ: Princeton University Press, 2013.



\bibitem{EM2} C.L. Epstein,  R. Mazzeo,
{\sl The Geometric Microlocal Analysis of Generalized Kimura and Heston Diffusions, } 
in Analysis and Topology in Nonlinear Differential Equations, edited by D. G. de Figueiredo, J. M. do \'O , and C. Tomei, 241--66. Progress in Nonlinear Differential Equations and Their Applications 85. New York, NY: Springer International Publishing AG, 2014.


\bibitem{FJK}E. Fabes, D. Jerison, and C. Kenig,
{\sl  The Wiener test for degenerate elliptic equations, }
Ann. Inst. Fourier (Grenoble) {\bf 32} (1982), No. 3, vi, 151--182.



\bibitem{FKS} E. G. Fabes, C. E. Kenig and R. P. Serapioni, 
{\sl  The local regularity of solutions of degenerate elliptic equations, }
  Commun. Part. Diff. Eq. {\bf 7}, n. 1, (1982), 77-116.


\bibitem{FS} B. Franchi and R. Serapioni, 
{\sl Pointwise estimates for a class of strongly degenerate elliptic operators: a geometrical approach, }
 Ann. Scuola Norm. Sup. Pisa Cl. Sci. (4) {\bf 14} (1987), No. 4, 527--568.



\bibitem{GU} V. Gol'dshtein and A. Ukhlov,  {\sl Weighted Sobolev spaces and embedding theorems, }
 Trans. Amer. Math. Soc. {\bf 361}, n. 7, (2009), 3829-3850.



\bibitem{HKM}J. Heinonen, T. Kilpela\"inen, and O. Martio, Nonlinear potential theory of degenerate elliptic equations. Oxford Mathematical Monographs. Oxford Science Publications. The Clarendon Press, Oxford University Press, New York, 1993.

\bibitem{K}
M. Kimura, {\sl Diffusion models in population genetics, } 
Journal of Applied Probability, {\bf 1} (1964), 177--232.


\bibitem{K} A. Kufner, Weighted Sobolev spaces,
Teubner-Texte zur Math., Bd. 31, Teubner Verlagsge- sellschaft, Leipzig (1980).



\bibitem{KJF} A. Kufner, O. John, and S. Fu\v cik,
 Function spaces. Monographs and Textbooks on Mechanics of Solids and Fluids; Mechanics: Analysis. Noordhoff International Publishing, Leyden; Academia, Prague, 1977.


\bibitem{KO} A. Kufner, B. Opic, 
{\sl How to define reasonably weighted Sobolev spaces,} 
Commentationes Mathematicae Universitatis Carolinae, Vol. 25 (1984), No. 3, 537--554.




\bibitem{MPP} D. Monticelli, K.R. Payne, and F. Punzo,
{\sl  Poincar\'e inequalities for Sobolev spaces with matrix-valued weights and 
applications to degenerate partial differential equations }
 Proc. Roy. Soc. Edinburgh Sect. A {\bf 149} (2019), no. 1, 61--100.

\bibitem{M} B. Muckenhoupt, {\sl Weighted norm inequalities for the Hardy maximal function, }
Trans. Amer. Math. Soc. {\bf 165 }(1972), 207--226.


\bibitem{MS}
M.K.V. Murthy, G. Stampacchia,
{\sl   Boundary problems for some degenerate elliptic operators, }
 Ann. Mat. Pura Appl (4), 1968, 80: 1--122.



\bibitem{PS} P. Pucci and J. Serrin, 
{\sl Dead cores and bursts for quasilinear singular elliptic equations, }
Siam J. Math. Anal.  {\bf 38} No. 1, pp. 259--278.

\bibitem{SK} A-M. S\"andig, A. Kufner, Some applications of weighted Sobolev spaces,
 Teubner-Texte zur Mathematik 100 Vieweg Teubner Verlag (1987).



\bibitem{T}A. Torchinsky, 
Real-variable methods in harmonic analysis. Pure and Applied Mathematics, 123. Academic Press, Inc., Orlando, FL, 1986.


\end{thebibliography}
\end{document}